\newtheorem{theorem}{Theorem} 
\newtheorem{proposition}[theorem]{Proposition}
\newtheorem{lemma}[theorem]{Lemma}
\newtheorem{corollary}[theorem]{Corollary}
\theoremstyle{definition}
\theoremstyle{definition}\newtheorem{example}[theorem]{Example}
\newcommand{\bE}{\mathbb{E}}
\newcommand{\bF}{\mathbb{F}}
\newcommand{\bN}{\mathbb{N}}
\newcommand{\bP}{\mathbb{P}}
\newcommand{\bR}{\mathbb{R}}
\newcommand{\bS}{\mathbb{S}}
\newcommand{\cA}{\mathcal{A}}
\newcommand{\cB}{\mathcal{B}}
\newcommand{\cF}{\mathcal{F}}
\newcommand{\cG}{\mathcal{G}}
\newcommand{\cH}{\mathcal{H}}
\newcommand{\cM}{\mathcal{M}}
\newcommand{\brend}{\hfill $\triangleleft$} 
\begin{document}

\title[]{A boundary theory approach to de Finetti's theorem}

\author{Julian Gerstenberg}
\address{Institut f\"ur Mathematische Stochastik\\ 
         Leibniz Universit\"at Hannover\\
         Postfach 6009\\
         30060 Hannover\\
         Germany}

\email{jgerst@stochastik.uni-hannover.de}

\author{Rudolf Gr\"ubel}
\address{Institut f\"ur Mathematische Stochastik\\ 
         Leibniz Universit\"at Hannover\\
         Postfach 6009\\
         30060 Hannover\\
         Germany}

\email{rgrubel@stochastik.uni-hannover.de}

\author{Klaas Hagemann}
\address{Institut f\"ur Mathematische Stochastik\\ 
         Leibniz Universit\"at Hannover\\
         Postfach 6009\\
         30060 Hannover\\
         Germany}

\email{hage@stochastik.uni-hannover.de}

\subjclass[2000]{60G09, 60J50}
\keywords{Boundary theory, exchangeability, Markov chains.}
\date{\today}

\begin{abstract} 
We show that boundary theory for transient Markov chains, as initiated by Doob, can be used to prove
de Finetti's classical representation result for exchangeable random sequences. We also 
include the relevant parts of the theory, with full proofs.
\end{abstract}

\maketitle

\section{Introduction}\label{sec:intro}
In words, de Finetti's theorem says that ``an infinite exchangeable sequence is a mixture of i.i.d.\ sequences''. 
For the history of this iconic structural result of probability theory, its many extensions and applications, and 
several different proofs, we refer the reader to the Saint Flour lecture notes~\cite{AldousSF} by Aldous 
(from which we have taken the above quotation) and to the more recent monograph~\cite{MR2161313} by Kallenberg.
Our aim in this note is to give a proof of de Finetti's theorem based on the boundary theory
for transient Markov chains as initiated by Doob~\cite{Doob59}, and to make this essentially self-contained 
by also including the relevant parts of the latter theory, again with proofs.

In order to be able to achieve this we have to leave aside many interesting and important aspects, such as
convexity considerations. In the case of de Finetti's theorem these provide a connection to ergodic theory,
which leads to a short proof based on the mean ergodic theorem~\cite[p.25f]{MR2161313}. Further, we restrict ourselves
to what we call combinatorial Markov chains. This class is sufficiently rich to provide a framework for many 
sequentially growing random discrete structures, such as permutations~\cite{VK81}, compositions and 
partitions~\cite{Gne97}, various binary tree models~\cite{EGW1,EGW2,EW16}, graphs~\cite{Grue15}, 
words~\cite{ChoiEvans}, and many others, with applications to the representation theory of the infinite
symmetric group, population genetics, and the analysis of algorithms. Both lists are far from complete. 

Among the above, \cite{Gne97} is an interesting example for the use of boundary theory in the context
of the analysis of exchangeable random structures. Recently, in the other direction, exchangeability results have
employed to determine the boundary for various combinatorial Markov chains; 
see~\cite{ChoiEvans, EGW2, EW16}. 

In the next section we introduce some notation and then give a formal statement of de Finetti's result
in Theorem\ \ref{thm:deFinetti}.
This is followed by an exposition of the boundary theory that we need. 
In Section~\ref{sec:dFproof} we then  prove Theorem~\ref{thm:deFinetti} in two
steps: We use the material from Section~\ref{sec:boundaryTheory} to obtain the result
for finite state spaces, which turns out to be very easy once the boundary theory is in place, and then apply standard 
arguments for the lift to more general spaces.


\section{de Finetti's theorem}%
\label{sec:dFthm}
We need some definitions. The random variables $X_n$, $n\in\bN$, are all defined on a common
probability space $(\Omega,\cA,\bP)$ and take their values in a measurable space $(E,\cB)$ which
we assume to be Borel, meaning that there exists a measure-theoretic isomorphism between 
this space and a Borel subset of the real line.
We write $(E_\infty,\cB_\infty)$ for the corresponding path space, where $E_\infty$ is the 
set of all sequences $x=(x_n)_{n\in\bN}$ of elements $x_n$ of $E$ and $\cB_\infty$ is generated 
by the projections $x=(x_n)_{n\in\bN}\mapsto x_i$ from $E_\infty$ to $E$, $i\in\bN$. We further write $\cM_1(E)$ 
and $\cM_1(E_\infty)$ for the set of probability measures on $(E,\cB)$ and $(E_\infty,\cB_\infty)$
respectively. 
In this setup we may regard the sequence $X$ as a random variable (measurable mapping)  with values in 
$(E_\infty,\cB_\infty)$ so that its distribution, the push-forward $\bP^X$ of $\bP$ under
the mapping $X$, is an element of $\cM_1(E_\infty)$.

Let $\bS$ be the set of finite permutations $\pi$ of $\bN$, meaning that $\pi:\bN\to \bN$ 
is bijective and that $\#\{i\in\bN:\, \pi(i)\not= i\}<\infty$. Each such permutation defines 
a function $\phi_\pi:E_\infty \to E_\infty$, $E_\infty \ni x=(x_n)_{n\in\bN}\mapsto (x_{\pi(n)})_{n\in \bN}$
which, again via push-forward, induces a mapping from and to $\cM_1(E_\infty)$. We say that
$X$ is exchangeable if its distribution is invariant under all $\phi_\pi$, $\pi\in \bS$. 

Spaces of measures such as $\cM_1(E)$ can be endowed with a measurable structure by 
requiring that $\mu\mapsto \mu(A)$ be measurable for all measurable sets~$A$; we write 
$\cB(\cM_1(E))$ for the resulting $\sigma$-field. In particular, it then makes sense to regard a 
random measure as a random variable with values in $\cM_1(E)$.  

\begin{theorem}[de Finetti]\label{thm:deFinetti}
If $X=(X_n)_{n\in\bN}$ is exchangeable then there exists a random variable $M$ with values in $\cM_1(E)$ 
such that, conditionally on $M=\mu$,  the $X_n$'s are i.i.d.\ with common distribution $M=\mu$.   
\end{theorem}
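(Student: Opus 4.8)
The plan is to follow the two-step strategy announced in the introduction: first establish the assertion when $E$ is finite, where the boundary theory of Section~\ref{sec:boundaryTheory} applies directly, and then lift to a general Borel space by approximation.

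For the finite case write $E=\{1,\dots,d\}$ and pass from the exchangeable sequence $X$ to its \emph{counting chain} $S=(S_n)_{n\in\bN_0}$, where $S_0=0$ and $S_n=\sum_{k=1}^n e_{X_k}\in\bN_0^d$, with $e_j$ the $j$-th unit vector, so that $S_n^{(j)}$ records how often the symbol $j$ has occurred up to time $n$. The first point I would check is that exchangeability makes $S$ a combinatorial Markov chain on the graded state space $\bN_0^d$, the level of a state $s$ being $|s|=\sum_j s_j$, which coincides with the time index: conditionally on $S_n=s$ the prefix $(X_1,\dots,X_n)$ is uniformly distributed over all sequences with count vector $s$, and consequently the conditional law of $X_{n+1}$ given $S_0,\dots,S_n$ depends only on $S_n$. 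This is exactly the combinatorial setting in which the boundary theory is formulated, and it reduces the problem to identifying the Doob--Martin boundary of $S$.

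The heart of the argument is this boundary computation. The chain $S$ is transient, and I would show that a sequence of states $s^{(n)}$ with $s^{(n)}/|s^{(n)}|\to\mu$ converges in the Martin topology with a limit depending only on $\mu\in\cM_1(E)=\Delta_{d-1}$; this identifies the boundary with the probability simplex over $E$ and exhibits the associated Martin kernel at $\mu$ as the kernel of the i.i.d.$(\mu)$ counting chain. Concretely this amounts to an asymptotic analysis of ratios of the combinatorial transition weights (multinomial coefficients), together with a minimality argument showing that these product kernels exhaust the extremal harmonic functions. Doob's convergence and representation theorem then yields a random boundary point $M$ with $S_n/n\toas M$ such that, conditionally on $M=\mu$, the chain $S$ is the $h$-transform governed by the kernel at $\mu$, i.e.\ the counting chain of an i.i.d.$(\mu)$ sequence. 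Since the conditional law of $X$ given the path of $S$ is the uniform one described above, this says precisely that, conditionally on $M=\mu$, the $X_n$ are i.i.d.\ with law $\mu$, which proves the theorem for finite $E$.

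For the lift I would use that $(E,\cB)$ is Borel, hence isomorphic to a Borel subset of $\bR$, and approximate by finite measurable partitions. For each finite partition $\cP$ the quantized sequence is exchangeable on a finite alphabet, so the finite case furnishes a random measure $M_\cP\in\cM_1(E)$ governing the conditional i.i.d.\ structure of the $\cP$-coarsened sequence. Taking a refining sequence of partitions generating $\cB$, the family $(M_\cP)$ is consistent and, by a martingale (or projective-limit) argument, converges to a random element $M$ of $\cM_1(E)$, while a monotone class argument on cylinder sets transfers the conditional i.i.d.\ property from the finite quotients to $X$ itself. I expect the boundary computation in the finite case---the precise identification of the Martin kernel and the verification that the product measures are exactly the extremal harmonic functions---to be the main obstacle, whereas the measurability bookkeeping in the lift, though technical, is routine.
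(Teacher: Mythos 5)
Your proposal is correct in outline and shares the paper's two-step architecture (counting chain plus boundary theory for finite $E$, quantization for the lift), but it accesses the finite-case boundary by a genuinely different mechanism. The paper never computes the Martin boundary of the exchangeable counting chain directly: its key lemma (Lemma~\ref{lem:YisMarkov}) shows that $Y$ is Markov with the P\'olya-type cotransitions $\bP(Y_n=y\,|\,Y_{n+1}=y+e_j)=(y_j+1)/(n+1)$, and Proposition~\ref{prop:cotrans}(b) then identifies the law of $Y$ as an $h$-transform of the explicit uniform multinomial walk of Example~\ref{ex:NERW}, whose boundary (the simplex $S(d)$, with the $K(\cdot,\alpha)$-transforms being the i.i.d.\ walks with step distribution $\alpha$) is computed once and for all; this is exactly the ``show your chain is an $h$-transform of a chain you understand'' strategy announced in Section~\ref{sec:boundaryTheory}. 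Your direct computation of the Martin kernel of $S$ also works, essentially for the reason you give: exchangeability makes $(X_1,\dots,X_n)$ conditionally uniform given $S_n$, so $\bP(S_m=x\,|\,S_n=y)$ is the same universal ratio of multinomial coefficients as for the uniform walk, the factor $1/\bP(S_m=x)$ is constant in $y$, and the asymptotics of Example~\ref{ex:NERW} transfer verbatim. Two remarks. First, your ``minimality argument showing that these product kernels exhaust the extremal harmonic functions'' is superfluous in this framework: Theorem~\ref{thm:repr} and the disintegration of Corollary~\ref{cor:condProb} hold over the whole boundary with no identification of extreme points --- the paper deliberately sidesteps convexity --- so what you flag as ``the main obstacle'' is not needed for existence of the directing measure (it would matter only for uniqueness, which the theorem does not assert). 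Second, weak irreducibility can fail for the counting chain of a general exchangeable sequence (a mixture of constant sequences puts zero mass on most count vectors), so your direct route must restrict to attainable states, i.e.\ work on $\bF_h$ with $h\in\cH_{1,\ge}$ as the paper's machinery permits; the paper's own reduction combines the assumption $\bP(X_n=x)>0$ with that framework. Your lift via refining partitions is the abstract form of the paper's concrete construction (dyadic digits $\Psi_k$, consistent directing measures $M_k$, Kolmogorov extension on $\{0,1\}^\infty$ outside a null set, push-forward under binary summation) and is fine modulo the projective-limit care you acknowledge; the paper's dyadic choice buys a clean setting in which Kolmogorov's theorem applies directly rather than an abstract convergence argument for $(M_\cP)$.
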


The conditioning means that, for all $k\in\bN$ and $A_1,\ldots,A_k\in \cB$,
\begin{equation}\label{eq:cond}
  \bP(X_1\in A_1,\ldots, X_k\in A_k) \,=\, \int\, \prod_{i=1}^k \mu(A_i) \ \bP^M(d\mu).
\end{equation}
Again, $\bP^M$ denotes a push-forward, now the distribution of the random measure $M$,
which is a probability measure on $(\cM_1(E),\cB(\cM_1(E)))$.
The interpretation is that
of a two-stage procedure: First, we select $M$, the \emph{directing measure}, according to $\bP^M$.
Then, if this selection resulted in $\mu\in\cM_1(E)$, we generate the $X_n$'s independently, all 
with distribution~$\mu$. Alternatively, using the concept of conditional probability,
we first define a probability kernel $Q$ from $(\cM_1(E),\cB(\cM_1(E))$ to $(E_\infty,\cB_\infty)$ by
$Q(\mu,.)=\mu^{\otimes \bN}$, where `$\otimes$' denotes product measure. The distribution of $X$
is then the product of the distribution of the directing measure and this kernel in the sense that
\begin{equation*}
  \bP(X\in A) = \int Q(\mu,A)\, \bP^{M}(d\mu)\quad\text{for all } A\in\cB_\infty.
\end{equation*}
This is, of course, the same as \eqref{eq:cond} and may be written shortly as $Q=\bP^{X|M}$.  
Both interpretations also arise in connection with boundaries.

\section{Boundary theory for combinatorial Markov chains}%
\label{sec:boundaryTheory}
By a combinatorial Markov chain (CMC) on $\bF$ we mean a Markov chain $Y=(Y_n)_{n\in\bN_0}$ with 
values in a combinatorial family $\bF$ that is adapted to $\bF$ in the sense that
\begin{equation*}
  \bP(Y_n\in\bF_n)=1 \ \text{ for all } n\in\bN_0.
\end{equation*}
Here $\bF_n$ is the (finite) set of objects $x$ with `size' $n$, and $\bF=\bigcup_{n=0}^\infty\bF_n$.
This implies that  time $n$ is a function of state $x$. We further assume that there is only one object $e$ 
of size 0, i.e.\ $\bF_0=\{e\}$ and $Y_0\equiv e$, and that the chain is weakly irreducible in the sense that
$\bP(Y_n=x)>0$ for all $n\in\bN$, $x\in\bF_n$. 

In general, a stochastic process such as $Y$ is based
on an underlying probability space $(\Omega,\cA,\bP)$ which is often left unspecified. In the
present context it is no loss of generality to work with the canonical construction, which here means that we 
take $\Omega$ to be the path space, i.e.\ $\Omega= \bF_\infty:=\prod_{n=0}^\infty \bF_n$;
we define $Y_n:\bF_\infty\to\bF_n$ as the projection to the
$n$th sequence element; and we let $\cA$ be the $\sigma$-field generated by the $Y_n$'s,  
which we denote by~$\cB_\infty$. Further, for all $n\in\bN_0$, let $\cF_n:=\sigma(\{Y_m:\, m\le n\})$
and $\cG_n:=\sigma(\{Y_m:\, m\ge n\})$.

For $m\le n$, $x\in\bF_m$, $y\in\bF_n$ let
\begin{equation} \label{eq:MartinK}
  K(x,y) := \frac{\bP(Y_n=y|Y_m=x)}{\bP(Y_n=y)}\quad 
                       \biggl(\  =\frac{\bP(Y_m=x|Y_n=y)}{\bP(Y_m=x)}\le \frac{1}{\bP(Y_m=x)}\Biggr),
\end{equation}
and put $K(x,y)=0$ otherwise. 
This is the \emph{Martin kernel}. Consider the discrete topology on $\bF$. Then 
\begin{equation*}
  \bF\ni y \ \mapsto\ \bigl(x\mapsto K(x,y)\bigr) 
               \in \prod_{m=1}^\infty\prod_{x\in\bF_m} \Bigl[0\, ,\, \frac{1}{\bP(Y_m=x)}\Bigr]
\end{equation*}
provides an embedding of $\bF$ into a compact space. 
We ignore the distinction between $\bF$ and its image in the product space. Then
the \emph{Doob-Martin compactification} $\bar\bF$ is the closure of $\bF$ in this
space, and $\partial\bF:=\bar\bF\setminus \bF$ is the \emph{Martin boundary}. 
By construction, $(y_n)_{n\in\bN}$ converges in the Doob-Martin topology if and only if the sequences
$\bigl(K(x,y_n)\bigr)_{n\in\bN}$ converge in $\bR$ for all $x\in\bF$. In the present context this implies
that either $y_n=y$ for all but finitely many $n\in\bN$ with some $y\in\bF$, or that $m_n\to\infty$ 
with $m_n$ given by $y_n\in\bF_{m_n}$.

Probabilistically we may interpret this as follows: Suppose that $(y_n)_{n\in\bN}$
is such that $y_n\in\bF_{m_n}$ with $m_n\to\infty$.
Then the sequence converges if and only if, for all fixed $k\in\bN$ 
and $x_0\in\bF_0,\ldots,x_k\in\bF_k$, $ \bP(Y_0=x_0,\ldots,Y_k=x_k| Y_{m_n}=y_n)$
converges as $n\to\infty$. This may be rephrased as
\begin{equation*}
  y_n \text{ converges }\  \Longleftrightarrow \ \; \bP^{\,Y|Y_{m_n}=y_n}\,\text{ converges}
\end{equation*}
if we interpret process convergence on the right hand side 
as convergence of the respective finite-dimensional distributions.

The following limit theorem is the first main result of the theory.

\begin{theorem}\label{thm:limit}
  In $\bar\bF$, $Y_n$ converges almost surely to some $Y_\infty$ with values in $\partial\bF$.
\end{theorem}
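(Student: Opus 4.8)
The plan is to reduce almost sure convergence of $(Y_n)$ in the compact metrizable space $\bar\bF$ to almost sure convergence of the real-valued sequences $\bigl(K(x,Y_n)\bigr)_n$ for each fixed $x\in\bF$, as permitted by the characterization of the Doob-Martin topology recalled above, and then to recognize each such sequence as a reverse martingale. Fixing $x\in\bF_m$ and taking $n\ge m$, I would combine the second form of the Martin kernel from \eqref{eq:MartinK}, namely $K(x,y)=\bP(Y_m=x\,|\,Y_n=y)/\bP(Y_m=x)$ for $y\in\bF_n$, with the Markov property written as $\bP(Y_m=x\,|\,\cG_n)=\bP(Y_m=x\,|\,Y_n)$ — the past event $\{Y_m=x\}$ with $m\le n$ depends on the future $\cG_n$ only through the present state $Y_n$. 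This yields the identity
\begin{equation*}
  K(x,Y_n)\,=\,\frac{1}{\bP(Y_m=x)}\,\bE\bigl[\mathbf 1_{\{Y_m=x\}}\,\big|\,\cG_n\bigr],\qquad n\ge m,
\end{equation*}
whose clean derivation is the conceptual core of the argument.

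Since the $\sigma$-fields $\cG_n$ \emph{decrease} in $n$, the right-hand side is, for each fixed $x$, a reverse martingale: indeed $\bE\bigl[\bE[\mathbf 1_{\{Y_m=x\}}|\cG_n]\,\big|\,\cG_{n+1}\bigr]=\bE[\mathbf 1_{\{Y_m=x\}}|\cG_{n+1}]$ because $\cG_{n+1}\subseteq\cG_n$, and the conditioned variable is bounded, hence integrable. By the reverse (downward) martingale convergence theorem, $K(x,Y_n)$ converges almost surely as $n\to\infty$, with limit $\bP(Y_m=x)^{-1}\,\bE[\mathbf 1_{\{Y_m=x\}}\,|\,\bigcap_n\cG_n]$. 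As $\bF=\bigcup_n\bF_n$ is countable, intersecting these probability-one events over all $x\in\bF$ shows that, almost surely, $\bigl(K(x,Y_n)\bigr)_n$ converges simultaneously for every $x\in\bF$; by the stated characterization of convergence this means $Y_n\to Y_\infty$ almost surely in $\bar\bF$ for some limit $Y_\infty$.

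Finally, to see that $Y_\infty$ takes values in $\partial\bF=\bar\bF\setminus\bF$, I would invoke the dichotomy noted before the theorem: a convergent sequence $(y_n)$ is either eventually constant in $\bF$, or its levels $m_n$ (defined by $y_n\in\bF_{m_n}$) tend to infinity. Because the canonical chain satisfies $Y_n\in\bF_n$, its levels are strictly increasing and $(Y_n)$ is never eventually constant, so its almost sure limit cannot lie in $\bF$ and therefore belongs to $\partial\bF$. I expect the only genuine obstacle to be the martingale identity in the first step; once the Markov property is applied in the correct past-given-future direction, the reverse martingale convergence theorem and the countability of $\bF$ complete the proof routinely.
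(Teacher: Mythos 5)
Your proposal is correct and follows essentially the same route as the paper: the paper's proof also reduces the claim to almost sure convergence of $K(x,Y_n)$ for each fixed $x\in\bF$, identifies $\bigl(K(x,Y_n),\cG_n\bigr)_{n\ge m}$ as a backwards martingale, and invokes the backwards martingale convergence theorem. Your identity $K(x,Y_n)=\bP(Y_m=x)^{-1}\,\bE\bigl[\mathbf{1}_{\{Y_m=x\}}\,\big|\,\cG_n\bigr]$ is simply a clean way of carrying out the step the paper calls ``straightforward to show,'' and your closing observations (countability of $\bF$, levels tending to infinity forcing $Y_\infty\in\partial\bF$) make explicit what the paper leaves implicit.
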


\begin{proof}
We need to show that, for every fixed $x\in\bF$, $K(x,Y_n)$ converges almost surely as $n\to\infty$.
With $x\in\bF_k$ and $n> k$ it is straightforward to show that $\bE\bigl[K(x,Y_n)\big| \cG_{n+1}\bigr]=K(x,Y_{n+1})$.
Now use the backwards martingale convergence theorem.
\end{proof}

In the CMC situation, $h:\bF \to \bR$ is \emph{harmonic} if $h(x) = \sum_{y\in\bF_{n+1}} \bP(Y_{n+1}=y|Y_n=x)\, h(y)$
for all $n\in\bN_0$, $x\in\bF_n$.  
Equivalently,
$(h(Y_n),\cF_n)_{n\in\bN_0}$ is a (forward) martingale. 
We write $\cH_{1,\ge}$ and $\cH_{1, >}$ for the set of all non-negative respectively positive harmonic functions $h$ 
that are normed in the sense that $h(e)=1$.  As a consequence of the fact that we only assume weak irreducibility
these sets may well be different; let $\bF_h:=\{x\in\bF:\, h(x)>0\}$.
Each $h\in\cH_{1,\ge}$ defines a  transition mechanism $p_h$ on $\bF_h$ via 
\begin{equation}\label{eq:transh}
  p_h(x,y) \, :=\,  \frac{1}{h(x)} \bP(Y_{n+1}=y|Y_n=x)\, h(y), 
\end{equation}
with $n\in\bN$, $x\in\bF_n\cap\bF_h$, $y\in\bF_{n+1}\cap\bF_h$.
The corresponding probability measure on the path space
is the \emph{$h$-transform} $\bP_h$ of~$\bP$. (The transform is often regarded as acting on the 
process $Y$. Here $Y$ is fixed as as we work with the canonical construction; we will occasionally say that $Y$ is a
CMC on $\bF_h$ under $\bP_h$.) Then 
\begin{equation*}
  \cM(\bP) := \{\bP_h:\, h\in \cH_{1,\ge}\}
\end{equation*}
is a family of probability measures on $(\bF_\infty,\cB_\infty)$, and it is
easy to check that 
\begin{equation}\label{eq:dens}
  \frac{\bP_h(Y_1=x_1,\ldots,Y_n=x_n)}{\bP(Y_1=x_1,\ldots,Y_n=x_n)} = h(x_n)
\end{equation}
whenever $\bP(Y_1=x_1,\ldots,Y_n=x_n)>0$. In words: $x \mapsto h(x)$, $x\in\bF_n$, is the density
of $\bP_h$ with respect to $\bP$, if both are restricted to $\cF_n$. This implies that
the Martin kernel $K_h$ associated with $\bP_h$, $h\in\cH_{1,>}$, is related to the original
Martin kernel $K$ by
\begin{equation}\label{eq:kernel_of_h-transform}
  K_h(x,y) \, = \, \frac{1}{h(x)} \, K(x,y) \ \text{ for all } x,y\in\bF.
\end{equation}
From this it is immediate that all $h\in\cH_{1,>}$ lead to the same Doob-Martin compactification and,
consequently, to  the same Martin  boundary. For $h\in\cH_{1,\ge}\setminus\cH_{1,>}$ the shrinkage from 
$\bF$ to $\bF_h$ may lead to a smaller boundary $\partial\bF_h$, but we may always regard $\bP_h$ as
(the distribution of) a CMC on $\bF_h$. Further,
the canonical embedding of $\bF_h$ into $\bF$ extends uniquely to a continuous and injective 
function $\phi_h: \bar\bF_h\to \bar\bF$. In particular, $Y_n$ converges almost surely to $Y_\infty$ as $n\to\infty$ 
with respect to $\bP_h$, for all $h\in\cH_{1,\ge}$. 

By construction, each function $K(x,\cdot)$, $x\in\bF$, has a unique continuous extension from $\bF$ to $\bar\bF$.
It is easy to see that
each $K(\cdot,\alpha)$, $\alpha\in \partial\bF$, is harmonic and, in fact, an element of $\cH_{1,\ge}$. For
$h\in\cH_{1,\ge}$, the corresponding $h$-transform  $\bP_h$ similar leads to a continuous extension of
the corresponding Martin kernel $K_h$, and the embedding $\phi_h$ gives the following extension 
of~\eqref{eq:kernel_of_h-transform},
\begin{equation}\label{eq:kernel_of_h-transform2}
  K_h(x,\alpha) = \frac{1}{h(x)} K(x,\alpha)\quad\text{for all } x\in\bF_h, \, \alpha\in\partial\bF.
\end{equation}
We can now state and prove the second main result.

\begin{theorem}\label{thm:repr}
Suppose that $h\in\cH_{1,\ge}$. Then 
\begin{equation}\label{eq:repr_h1}
  h(x) \; =\; \int K(x,\alpha)\, \bP_h^{Y_\infty}(d\alpha) \quad\text{for all } x\in\bF_h.
\end{equation}
\end{theorem}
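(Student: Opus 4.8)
The plan is to read the right-hand side of \eqref{eq:repr_h1} as an expectation under $\bP_h$ and to evaluate it by approximating the boundary kernel $K(x,\cdot)$ along the path of $Y$. By the defining property of the push-forward,
\begin{equation*}
  \int K(x,\alpha)\,\bP_h^{Y_\infty}(d\alpha)\;=\;\bE_{\bP_h}\bigl[K(x,Y_\infty)\bigr],
\end{equation*}
where the integrand is the unique continuous extension of $K(x,\cdot)$ from $\bF$ to $\bar\bF$. Since $Y_n\to Y_\infty$ almost surely under $\bP_h$ (recorded above for every $h\in\cH_{1,\ge}$) and $K(x,\cdot)$ is continuous on $\bar\bF$, we get $K(x,Y_n)\to K(x,Y_\infty)$ almost surely with respect to $\bP_h$. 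Fixing $x\in\bF_h$ with $x\in\bF_k$, it therefore suffices to evaluate $\bE_{\bP_h}\bigl[K(x,Y_n)\bigr]$ for $n\ge k$ and to justify passing to the limit inside the expectation.

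First I would carry out the finite-$n$ computation. Summing the density relation \eqref{eq:dens} over the first $n-1$ coordinates gives $\bP_h(Y_n=y)=h(y)\,\bP(Y_n=y)$ for $y\in\bF_n$, so that
\begin{equation*}
  \bE_{\bP_h}\bigl[K(x,Y_n)\bigr]\;=\;\sum_{y\in\bF_n}K(x,y)\,h(y)\,\bP(Y_n=y).
\end{equation*}
Substituting $K(x,y)=\bP(Y_n=y\mid Y_k=x)/\bP(Y_n=y)$ from \eqref{eq:MartinK} cancels $\bP(Y_n=y)$ and leaves $\sum_{y\in\bF_n}\bP(Y_n=y\mid Y_k=x)\,h(y)=\bE\bigl[h(Y_n)\mid Y_k=x\bigr]$. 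Because $h$ is harmonic, $\bigl(h(Y_n),\cF_n\bigr)$ is a $\bP$-martingale, and this conditional expectation equals $h(x)$. Hence $\bE_{\bP_h}\bigl[K(x,Y_n)\bigr]=h(x)$ for every $n\ge k$; the sequence of expectations is in fact constant.

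It remains to pass to the limit. Here the bound recorded in \eqref{eq:MartinK} is decisive: for $x\in\bF_k$ one has $K(x,y)\le 1/\bP(Y_k=x)$ for all $y$, and this bound persists for the continuous extension, so the family $\bigl(K(x,Y_n)\bigr)_{n\ge k}$ is uniformly bounded by the finite constant $1/\bP(Y_k=x)$. Dominated convergence then gives
\begin{equation*}
  \bE_{\bP_h}\bigl[K(x,Y_\infty)\bigr]\;=\;\lim_{n\to\infty}\bE_{\bP_h}\bigl[K(x,Y_n)\bigr]\;=\;h(x),
\end{equation*}
which is \eqref{eq:repr_h1}. Alternatively, viewing $Y$ as a CMC on $\bF_h$ under $\bP_h$, whose Martin kernel satisfies $K_h(x,\cdot)=K(x,\cdot)/h(x)$ by \eqref{eq:kernel_of_h-transform}, the backward-martingale identity from the proof of Theorem~\ref{thm:limit} shows that $\bigl(K(x,Y_n)\bigr)_{n\ge k}$ is a backward martingale under $\bP_h$, which yields the interchange via $L^1$-convergence. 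I expect the only real care to lie not in these steps but in the bookkeeping at the boundary: one must make sure that under $\bP_h$ the almost sure limit $Y_\infty$ is identified inside $\bar\bF$ through the embedding $\phi_h$, so that $K(x,Y_\infty)$ really is the continuous extension being integrated against $\bP_h^{Y_\infty}$.
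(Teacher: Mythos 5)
Your proof is correct, but it organizes the argument differently from the paper. The paper first proves the normalization identity $\int K(x,\alpha)\,\bP^{Y_\infty}(d\alpha)=1$ for the original chain (a.s.\ convergence of $K(x,Y_n)$ plus backward-martingale $L^1$-convergence, combined with the direct computation $\bE K(x,Y_n)=1$ from the second expression in \eqref{eq:MartinK}), then \emph{repeats this entire argument for the transformed chain} on $\bF_h$ with kernel $K_h$, and finally converts $\int K_h(x,\alpha)\,\bP_h^{Y_\infty}(d\alpha)=1$ into \eqref{eq:repr_h1} via the extension \eqref{eq:kernel_of_h-transform2}, which requires dividing by $h(x)$ and hence restricting to $x\in\bF_h$. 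You instead compute directly under $\bP_h$ with the original kernel: the marginal density $\bP_h(Y_n=y)=h(y)\,\bP(Y_n=y)$ from \eqref{eq:dens} plus the cancellation in \eqref{eq:MartinK} yields $\bE_{\bP_h}[K(x,Y_n)]=\bE[h(Y_n)\mid Y_k=x]=h(x)$ by harmonicity, and the interchange of limit and expectation comes from the uniform bound $K(x,\cdot)\le 1/\bP(Y_k=x)$, which is built into the compactification, via bounded convergence rather than backward-martingale $L^1$-theory (your parenthetical alternative via $K_h$ is exactly the paper's route). Two remarks on what each version buys. First, your route never divides by $h(x)$, so it actually establishes \eqref{eq:repr_h1} for \emph{all} $x\in\bF$, not only $x\in\bF_h$ (when $h(x)=0$ both sides vanish, since $\bE[h(Y_n)\mid Y_k=x]=h(x)$); the paper's route genuinely needs $x\in\bF_h$. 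Second, you were right to flag the boundary bookkeeping as the one delicate point: you still need $Y_n\to Y_\infty$ $\bP_h$-a.s.\ \emph{in $\bar\bF$}, and in the paper this is obtained by applying Theorem~\ref{thm:limit} to the CMC on $\bF_h$ and transporting the limit through the embedding $\phi_h:\bar\bF_h\to\bar\bF$ — the statement ``recorded above'' that you cite is itself a consequence of re-running the limit theorem for the $h$-chain, so your proof does not entirely avoid the transformed chain, only the kernel-identity half of the paper's repetition. With that citation made explicit, your argument is complete and marginally more economical and more general than the paper's.
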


\begin{proof} 
From Theorem~\ref{thm:limit} and the definition of the Doob-Martin topology we know that $K(x,Y_n)$
converges to $K(x,Y_\infty)$ $\bP$-almost surely for all $x\in\bF$, and we further know from its proof that
$(K(x,Y_n),\cG_n)_{n\ge m}$ is a backwards martingale if $x\in\bF_m$. For these $L^1$-convergence is automatic, so that
\begin{equation*}
  \lim_{n\to\infty} \bE K(x,Y_n) = \bE K(x,Y_\infty) = \int K(x,\alpha)\, \bP^{Y_\infty}(d \alpha).
\end{equation*}
Using the second expression for $K$ in~\eqref{eq:MartinK} we get, for $n>m$, 
\begin{equation*}
  \bE K(x,Y_n) \;=\; \sum_{y\in\bF_n} \frac{\bP(Y_m=x|Y_n=y)}{\bP(Y_m=x)}\, \bP(Y_n=y)\; = \; 1. 
\end{equation*}
Taken together this shows that 
\begin{equation}\label{eq:repr_h1}
  \int K(x,\alpha)\, \bP^{Y_\infty}(d\alpha) = 1 \quad\text{for all } x\in\bF.
\end{equation}
Now fix $h\in\cH_{1,\ge}$. Repeating the above for the CMC on $\bF_h$ given by the corresponding $h$-transform $\bP_h$ with
Martin kernel $K_h$  we obtain
\begin{equation}\label{eq:repr_h1}
  \int K_h(x,\alpha)\, \bP_h^{Y_\infty}(d\alpha) = 1 \quad\text{for all } x\in\bF_h.
\end{equation}
An appeal to \eqref{eq:kernel_of_h-transform2} now concludes the proof.
\end{proof}

The above is the classical representation result of the theory, with additional information about the
representing measure. Again, we may rephrase this in more probabilistic terms. Let $Q:\partial\bF\times \cB_\infty\to [0,1]$ 
be defined by
\begin{equation*}
  Q(\alpha,A) = \bP_{K(\cdot,\alpha)}(A)\quad\text{for all } \alpha\in\partial \bF, \, A\in \cB_\infty.
\end{equation*}
Then $A\to Q(\alpha,A)$ is a probability measure on $(\bF_\infty,\cB_\infty)$ for all 
$\alpha\in\partial\bF$, and it is easy to show that $\alpha\mapsto Q(\alpha,A)$
is $\cB(\partial\bF)$-measurable for all $A\in\cB_\infty$, where $\cB(\partial\bF)$ denotes
the Borel $\sigma$-field on $\partial\bF$. Hence $Q$ is a probability kernel from
$(\partial \bF,\cB(\partial\bF))$ to $(\bF_\infty,\cB_\infty)$. 

\begin{corollary}\label{cor:condProb}
The kernel $Q$ is a version of $\,\bP_h^{Y|Y_\infty}$ for all $h\in\cH_{1,\ge }$.  
\end{corollary}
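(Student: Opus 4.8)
The plan is to verify the defining relation of a regular conditional distribution on a generating class and then use the $h$-transform machinery to pass from $h\equiv 1$ to general $h$. Since $Q(\alpha,\cdot)$ and any version of $\bP_h^{Y|Y_\infty}$ are probability measures on $(\bF_\infty,\cB_\infty)$, and the cylinders $C=\{Y_0=x_0,\ldots,Y_k=x_k\}$ form a $\cap$-stable generator of $\cB_\infty$, it suffices to show, for every such $C$ and every $B\in\cB(\partial\bF)$, that
\[
  \bP_h(Y\in C,\, Y_\infty\in B)\;=\;\int_B Q(\alpha,C)\,\bP_h^{Y_\infty}(d\alpha);
\]
for fixed $B$ both sides are finite measures in $C$, so agreement on this generator yields agreement on all of $\cB_\infty$. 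By \eqref{eq:dens} applied to the harmonic function $K(\cdot,\alpha)\in\cH_{1,\ge}$ we have the explicit value $Q(\alpha,C)=\bP_{K(\cdot,\alpha)}(C)=K(x_k,\alpha)\,\bP(C)$, which is what makes the right-hand side tractable.

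The heart of the argument is the identity $\bP(Y_\infty\in B\mid Y_k=x)=\int_B K(x,\alpha)\,\bP^{Y_\infty}(d\alpha)$ for $x\in\bF_k$, describing the conditional law of the boundary limit given the present. To prove it I would test against an arbitrary bounded continuous $F\colon\bar\bF\to\bR$. Conditioning on the positive-probability event $\{Y_k=x\}$ and using that $Y_n\to Y_\infty$ almost surely (Theorem~\ref{thm:limit}), bounded convergence gives $\bE[F(Y_\infty)\mid Y_k=x]=\lim_n\sum_{y\in\bF_n}F(y)\,\bP(Y_n=y\mid Y_k=x)$. Rewriting $\bP(Y_n=y\mid Y_k=x)=K(x,y)\,\bP(Y_n=y)$ turns the sum into $\bE[F(Y_n)K(x,Y_n)]$; since $(K(x,Y_n))_{n\ge k}$ is a backwards martingale (from the proof of Theorem~\ref{thm:limit}) it is uniformly integrable, so the bounded factor $F$ does not destroy $L^1$-convergence and the limit equals $\bE[F(Y_\infty)K(x,Y_\infty)]=\int F(\alpha)K(x,\alpha)\,\bP^{Y_\infty}(d\alpha)$. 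As $\bar\bF$ is compact metrizable, finite Borel measures agreeing on bounded continuous functions coincide, so the displayed identity follows (simultaneously for the countably many $x$). Combining it with the Markov property, which factors $\bP(Y\in C,\,Y_\infty\in B)=\bP(C)\,\bP(Y_\infty\in B\mid Y_k=x_k)$ because $\{Y_\infty\in B\}\in\cG_{k+1}$, settles the case $h\equiv 1$.

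For general $h\in\cH_{1,\ge}$ I would apply the previous two paragraphs verbatim to $Y$ viewed as a CMC on $\bF_h$ under $\bP_h$, whose Martin kernel is $K_h$ and for which $Y_n\to Y_\infty$ $\bP_h$-almost surely: this yields $\bP_h(Y\in C,\,Y_\infty\in B)=\bP_h(C)\int_B K_h(x_k,\alpha)\,\bP_h^{Y_\infty}(d\alpha)$. It then remains to translate back to $K$ and $\bP$ using \eqref{eq:dens} in the form $\bP_h(C)=h(x_k)\,\bP(C)$ and the boundary relation \eqref{eq:kernel_of_h-transform2} in the form $K_h(x_k,\alpha)=h(x_k)^{-1}K(x_k,\alpha)$. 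The two factors $h(x_k)$ cancel, leaving $\bP(C)\int_B K(x_k,\alpha)\,\bP_h^{Y_\infty}(d\alpha)=\int_B Q(\alpha,C)\,\bP_h^{Y_\infty}(d\alpha)$, exactly the right-hand side above. I expect the main obstacle to be the heart identity: justifying the interchange of limit and conditional expectation, where the uniform integrability supplied by the backwards-martingale structure of $K(x,Y_n)$ is essential, together with the measure-determining step on the compact metrizable space $\bar\bF$. The remaining manipulations are bookkeeping with \eqref{eq:dens} and \eqref{eq:kernel_of_h-transform2}.
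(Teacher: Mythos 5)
Your proof is correct, but it takes a genuinely different (and in one respect stronger) route than the paper. The paper's proof is a two-line reduction to Theorem~\ref{thm:repr}: it checks the identity $\bP_h(A)=\int Q(\alpha,A)\,\bP_h^{Y_\infty}(d\alpha)$ on cylinder sets only, where \eqref{eq:dens} evaluates the two sides as $h(x_m)\bP(A)$ and $K(x_m,\alpha)\bP(A)$, so everything collapses to the representation $h(x)=\int K(x,\alpha)\,\bP_h^{Y_\infty}(d\alpha)$ already proved. You never invoke Theorem~\ref{thm:repr} as a black box; instead you prove the localized identity $\bP(Y_\infty\in B\mid Y_k=x)=\int_B K(x,\alpha)\,\bP^{Y_\infty}(d\alpha)$ by rerunning the backwards-martingale limit argument against a bounded continuous test function, combine it with the Markov property (conditional independence of $\cF_k$ and $\cG_k$ given $Y_k$), and then transfer to general $h\in\cH_{1,\ge}$ via \eqref{eq:dens} and \eqref{eq:kernel_of_h-transform2}, exactly the transfer mechanism the paper uses inside Theorem~\ref{thm:repr} itself; taking $F\equiv 1$ in your heart identity recovers the paper's intermediate equation $\int K_h(x,\alpha)\,\bP_h^{Y_\infty}(d\alpha)=1$, so you have in effect re-proved Theorem~\ref{thm:repr} along the way. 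Two small remarks: uniform integrability is even cheaper than you make it, since $K(x,Y_n)\le 1/\bP(Y_k=x)$ by \eqref{eq:MartinK}, so bounded convergence suffices; and your measure-determining step on the compact metrizable $\bar\bF$ is sound. What your route buys is worth emphasizing: the defining property of a version of $\bP_h^{Y|Y_\infty}$ is the disintegration $\bP_h(Y\in A,\,Y_\infty\in B)=\int_B Q(\alpha,A)\,\bP_h^{Y_\infty}(d\alpha)$ for all $B\in\cB(\partial\bF)$, and this is what you verify, whereas the paper's proof, read literally, checks only the case $B=\partial\bF$, i.e.\ the mixture identity, which by itself does not single out the conditional distribution (the constant kernel $Q(\alpha,\cdot)\equiv\bP_h$ also satisfies it); closing that gap requires either your localized identity or the supplementary fact that $Y_\infty=\alpha$ holds $Q(\alpha,\cdot)$-almost surely. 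So your argument is longer but strictly stronger and more self-contained, while the paper's is maximally economical given Theorem~\ref{thm:repr}, at the price of leaving the localization in $B$ implicit.
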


\begin{proof} We need to show that, for all $A\in\cB_\infty$,
\begin{equation*}
  \bP_h(A) = \int Q(\alpha,A)\, \bP_h^{Y_\infty}(d\alpha).
\end{equation*}
Because of the structure of $\cB_\infty$ is is enough to do so for sets $A$ of the form
\begin{equation*}
A=\bigl\{(y_n)_{n\in\bN} \in \bF_\infty:\, y_1=x_1,\ldots,y_m=x_m\bigr\}, 
\end{equation*}
with $m\in\bN$ and $x_1\in\bF_1,\ldots,x_m\in\bF_m$ such that $\bP_h(A)>0$.
For such a set  $A$ we obtain, using~\eqref{eq:dens} and the definition of $Q$,
\begin{equation*}
  \bP_h(A) = h(x_m)\, \bP(A), \quad Q(\alpha,A) = K(x_m,\alpha)\, \bP(A),
\end{equation*}
so that it remains to show that 
\begin{equation}\label{eq:repr_h}
  \int K(x,\alpha)\, \bP_h^{Y_\infty}(d\alpha) = h(x) \quad\text{for all } x\in\bF_h,
\end{equation}
but this is the content of Theorem~\ref{thm:repr}.
\end{proof}

Much the same as in the de Finetti situation, see the end of Section~\ref{sec:dFthm}, a CMC may thus be regarded as
a two-stage experiment, with the boundary element $\alpha$ chosen at random and then running the 
chain according to $\bP_h$ with $h=K(\cdot,\alpha)$.

This is an elegant theory, and, in view of their space-time property, quite accessible for CMCs.
However,  to work out things for a given chain, i.e.\ to
\begin{itemize}
\item[$-$] describe $\bar \bF$ as a topological space in terms 
                                      of more familiar objects,
\item[$-$] find the distribution of $Y_\infty$,
\item[$-$] describe the $h$-transforms for $h=K(\cdot,\alpha)$, with 
                                      $\alpha \in \partial\bF$,
\end{itemize}
may be difficult. An occasionally efficient strategy is to show that the chain of interest is an $h$-transform 
of some other chain where these tasks are easy or have already been carried out. Any characterization of 
the family $\cM(\bP)$ could be useful in this context; we provide one  that will turn out to work in the de Finetti
case.

One often uses 
\begin{equation}\label{eq:MEsimple}
  \bP(Y_{n+1}=x_{n+1}|Y_n=x_n,\ldots,Y_1=x_1)\, = \, \bP(Y_{n+1}=x_{n+1}|Y_n\!=\!x_n)
\end{equation}
as a definition of the Markov property. Equivalently, for all $n\in\bN$, $\cF_n$ and $\cG_n$ are 
conditionally independent, given $\sigma(Y_n)$. The latter definition exhibits the underlying 
symmetry which can also be seen in~\eqref{eq:MartinK}. In particular we can 
build the chain backwards from any finite instant of time, using the \emph{cotransitions}
$\bF_{n+1}\times\bF_n \ni (y,x) \ \mapsto\ \bP(Y_n=x|Y_{n+1}=y)$. 
Indeed, such a time reversal has appeared repeatedly, if implicitly,  above.

\begin{proposition}\label{prop:cotrans}
\emph{(a)} For all $h\in\cH_{1,>}$, $\bP_h$ has the same cotransitions as $\bP$.

\smallbreak
\emph{(b)} If $\,\bP^\circ$ is a probability measure on $(\bF_\infty,\cB_\infty)$ such that $Y$ is  
a CMC on $\bF$ under $\bP^\circ$, and if $\,\bP^\circ$ has the same cotransitions as $\bP$, then $\bP^\circ=\bP_h\,$ 
for some $h\in\cH_{1,>}$.
\end{proposition}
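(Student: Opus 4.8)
The plan is to exploit the backward (time-reversed) description of a CMC. Because $Y_0\equiv e$ and the chain is Markov, for every $n$ the joint law of $(Y_0,\dots,Y_n)$ factors as the time-$n$ marginal times a product of cotransitions,
\begin{equation*}
  \bP(Y_0=x_0,\dots,Y_n=x_n)=\bP(Y_n=x_n)\prod_{m=0}^{n-1}\bP(Y_m=x_m\,|\,Y_{m+1}=x_{m+1}),
\end{equation*}
and likewise for $\bP^\circ$. Thus a CMC on $\bF$ is completely determined by its cotransitions together with its one-dimensional marginals $x\mapsto\bP(Y_n=x)$, $x\in\bF_n$. This observation is the backbone of both parts.

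For part (a) I would simply compute the cotransition of $\bP_h$ by Bayes' rule. Using \eqref{eq:transh} for the forward transition of $\bP_h$ and the fact, read off from \eqref{eq:dens}, that $\bP_h(Y_n=x)=h(x)\,\bP(Y_n=x)$ for $x\in\bF_n$, the factors $h(x)$ and $h(y)$ cancel and one is left with the cotransition of $\bP$. Positivity of $h$ throughout $\bF$ (since $h\in\cH_{1,>}$, so $\bF_h=\bF$) ensures that no division by zero occurs.

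For part (b), given $\bP^\circ$ with the same cotransitions, I would define $h\colon\bF\to\bR$ by $h(x):=\bP^\circ(Y_n=x)/\bP(Y_n=x)$ for $x\in\bF_n$; this is well defined since $n$ is a function of $x$, it is strictly positive because weak irreducibility of both chains makes numerator and denominator positive, and $h(e)=1$. The key step is to verify harmonicity: rewriting the forward transition $\bP(Y_{n+1}=y\,|\,Y_n=x)$ through its cotransition via the Bayes identity in \eqref{eq:MartinK}, substituting the \emph{identical} cotransitions of $\bP^\circ$, and then using $\sum_{y\in\bF_{n+1}}\bP^\circ(Y_n=x\,|\,Y_{n+1}=y)\,\bP^\circ(Y_{n+1}=y)=\bP^\circ(Y_n=x)$ collapses the defining sum to $h(x)$. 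Hence $h\in\cH_{1,>}$.

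It then remains to identify $\bP^\circ$ with $\bP_h$. On a cylinder $\{Y_1=x_1,\dots,Y_n=x_n\}$ the backward factorization shows the two laws share the same cotransition product, so their ratio equals $\bP^\circ(Y_n=x_n)/\bP(Y_n=x_n)=h(x_n)$; comparing with \eqref{eq:dens} yields $\bP^\circ=\bP_h$ on cylinders, and the two measures agree on all of $\cB_\infty$ by the usual $\pi$-system uniqueness argument. I expect the main point to get right to be the bookkeeping around null cylinders: if $\bP(Y_1=x_1,\dots,Y_n=x_n)=0$ then, since the marginal $\bP(Y_n=x_n)$ is positive, some cotransition factor must vanish, whence the same factor vanishes for $\bP^\circ$ and both sides are zero. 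The cleanly stated fact that cotransitions together with marginals determine the law is what makes everything else routine algebra.
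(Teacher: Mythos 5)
Your proposal is correct and follows exactly the paper's (very terse) proof: part (a) by cancelling the $h$-factors between \eqref{eq:transh} and the marginal identity $\bP_h(Y_n=x)=h(x)\,\bP(Y_n=x)$ from \eqref{eq:dens}, and part (b) by setting $h(x)=\bP^\circ(Y_n=x)/\bP(Y_n=x)$. You supply the details the paper calls immediate --- the harmonicity check via the shared cotransitions, the null-cylinder bookkeeping, and the $\pi$-system identification of $\bP^\circ$ with $\bP_h$ --- all of which are sound.
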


\begin{proof}
Part (a) is immediate from~\eqref{eq:transh} and~\eqref{eq:dens}. For~(b) we put
$h(x)=\bP^\circ(Y_n=x)/\bP(Y_n=x)$ for $x\in\bF_n$. 
\end{proof}

We mention in passing that, once again, for CMCs the proof is very simple in view of their
space-time property.  

\begin{example}\label{ex:NERW}
We consider the family $\bF$ of weak compositions of natural numbers with $d$ parts, 
$d\in\bN$ fixed, where
\begin{equation*}
  \bF_n = \bigl\{ (x_1,\ldots,x_d)\in\bN_0^d:\, x_1+\cdots + x_d=n\bigr\} 
\end{equation*}
for all $n\in\bN$, and $\bF_0=\{(0,\ldots,0)\}$. 
For $j=1,\ldots,d$ let $e_j$ be the element of $\bF_1$ that has $x_j=1$.
Then a standard CMC on $\bF$ arises as the sequence
of partial sums for a sequence of independent random vectors $X_n$, $n\in\bN$, with $X_n$ 
uniformly distributed on $\{e_j:\, j=1,\ldots,d\}$, meaning that $Y_n=\sum_{i=1}^n X_i$ for all 
$n\in\bN_0$.  Clearly, for $x=(x_1,\ldots,x_d)\in \bF_m$ 
and $y=(y_1,\ldots,y_d)\in \bF_n$, $n>m$, with $y_i\ge x_i$ for  $i=1,\ldots,d$,
\begin{equation*}\displaystyle
  K(x,y) \, = \, \frac{\displaystyle\binom{n-m}{y_1-x_1,\ldots,y_d-x_d} \, d^{-(n-m)}}
                                  {\displaystyle\binom{n}{y_1,\ldots,y_d} \, d^{-n}}.
\end{equation*}
A special case of this is $K(e_j,y)=d y_j/n$. It follows easily that a sequence $(y_n)_{n\in\bN}$, 
$y_n=(y_{n1},\ldots,y_{nd})\in\bF_{m_n}$ with $m_n\to\infty$, converges in the Doob-Martin topology if and only if the
sequences $(y_{ni}/m_n)_{n\in\bN}$, $i=1,\ldots,d$, converge in $\bR$, and that $\partial\bF$ is
(homeomorphic to) the $(d-1)$-dimensional probability simplex 
\begin{equation*}
  S(d) := \Bigl\{ \alpha=(\alpha_1,\ldots,\alpha_d)\in \bR_+^d:\, \sum_{i=1}^d \alpha_i =1 \Bigr\}.
\end{equation*}
Further, $K(x,\alpha) = \prod_{i=1}^d \alpha_i^{x_i}$   
for all $x=(x_1,\ldots,x_d)\in\bF$, $\alpha=(\alpha_1,\ldots,\alpha_d)\in S(d)$,
so that 
\begin{equation*}
  \bP_{K(\cdot,\alpha)}(Y_{n+1}=x+e_j|Y_n=y)\, = \, \alpha_j, \quad j=1,\ldots,d.
\end{equation*}
In words: Conditioned on $Y_\infty=\alpha$, the process is a random walk with step distribution $\alpha$.
An extreme case of state space reduction arises if $\alpha=(1,0,\ldots,0)$, for example: The 
corresponding $\bF_h$ consists of the compositions $(n,0,0,\ldots,0)$, and $\bP_h$ is concentrated on a
single path.
Finally, a straightforward calculation shows that the cotransitions are given by
\begin{equation*}
  \bP(Y_n=y|Y_{n+1}=y+e_j) \, = \, \frac{y_j+1}{n+1}, \quad j=1,\ldots,d,
\end{equation*}
for all $y=(y_1,\ldots,y_d)\in \bF$. 
\brend
\end{example}

\section{The proof}%
\label{sec:dFproof}

In the first step of the proof we assume that $E$ is finite, with all subsets being measurable.
Let $d:=\#E$; 
for (notational) simplicity we may assume that $E=\{1,\ldots,d\}$.
Given the exchangeable process $X$ with values in $E$ we define a process $Y=(Y_n)_{n\in\bN_0}$ with values
in the combinatorial family $\bF$ of weak compositions with $d$ parts by
\begin{equation*}
  Y_n := \bigl(\#\{1\le i\le n:\, X_i=k\}\bigr)_{k=1,\ldots,d} \quad \text{for all } n\in\bN_0.
\end{equation*}
In words: We count how often the different values have appeared up to time $n$.
In the following lemma we collect two simple, but decisive observations.

\begin{lemma}\label{lem:YisMarkov}
The process $Y$ is a Markov chain, and its cotransitions are given by 
\begin{equation*}
  \bP(Y_n=y|Y_{n+1}=y+e_j) \, = \, \frac{y_j+1}{n+1}, \quad j=1,\ldots,d,\; y\in\bF_n.
\end{equation*}
\end{lemma}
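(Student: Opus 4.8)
The plan is to route everything through exchangeability via one auxiliary function. First I would record, for each $y=(y_1,\ldots,y_d)\in\bF_n$, the common probability $p(y):=\bP(X_1=a_1,\ldots,X_n=a_n)$ of any word $(a_1,\ldots,a_n)\in E^n$ whose value counts equal $y$ (i.e.\ $\#\{i\le n:a_i=k\}=y_k$ for all $k$). That this is well defined is exactly exchangeability: any two such words differ by a permutation of $\{1,\ldots,n\}$, which extends to an element of $\bS$ and hence leaves the distribution of $X$ invariant. The crucial remark is that a path $y_0=e,y_1,\ldots,y_n=y$ of $Y$ encodes a \emph{single} word --- the one with $e_{a_i}=y_i-y_{i-1}$ --- so that
\begin{equation*}
  \bP(Y_0=y_0,\ldots,Y_n=y_n)=p(y_n),
\end{equation*}
a quantity that sees the path only through its endpoint.

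Granting this, the Markov property is immediate: on the event that the conditioning has positive probability,
\begin{equation*}
  \bP\bigl(Y_{n+1}=y+e_j\bigm|Y_0=y_0,\ldots,Y_n=y\bigr)=\frac{p(y+e_j)}{p(y)}
\end{equation*}
is free of $y_0,\ldots,y_{n-1}$, so $\bP(Y_{n+1}=\cdot\mid\cF_n)=\bP(Y_{n+1}=\cdot\mid Y_n)$ and $Y$ is a CMC on $\bF$.

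For the cotransitions I would count words rather than manipulate $p$-ratios. Since $\{Y_n=y\}$ splits into the $\binom{n}{y_1,\ldots,y_d}$ words of length $n$ with counts $y$, one has $\bP(Y_n=y)=\binom{n}{y_1,\ldots,y_d}\,p(y)$, and similarly the joint event $\{Y_n=y,\,Y_{n+1}=y+e_j\}$ is carried by the $\binom{n}{y_1,\ldots,y_d}$ words that have counts $y$ after $n$ steps and value $j$ at step $n+1$, each of probability $p(y+e_j)$. Dividing by $\bP(Y_{n+1}=y+e_j)=\binom{n+1}{y_1,\ldots,y_j+1,\ldots,y_d}\,p(y+e_j)$ leaves the bare quotient of multinomial coefficients, and an elementary simplification gives the asserted value $(y_j+1)/(n+1)$.

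There is no real obstacle here; the single point requiring care --- and the only place the hypothesis enters --- is the well-definedness of $p(y)$, namely that exchangeability makes all words with a common count vector equiprobable, together with the bijection between $Y$-paths and $E$-words. The rest is bookkeeping with multinomial coefficients. I would only note in passing that the conditional statements are meant on the support of the relevant chain, the identification of which (weak irreducibility) belongs to the surrounding theory rather than to this lemma.
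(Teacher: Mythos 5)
Your proof is correct and follows essentially the same route as the paper: exchangeability makes all words with a given count vector equiprobable, which gives the Markov property (path probabilities depend only on the endpoint --- the paper phrases this as constancy on the permutation-invariant sets $B(y_n)$ followed by an averaging step over prefixes, which your identity $\bP(Y_0=y_0,\ldots,Y_n=y_n)=p(y_n)$ renders unnecessary), and the cotransitions follow from the same count, your multinomial quotient $\binom{n}{y_1,\ldots,y_d}\big/\binom{n+1}{y_1,\ldots,y_j+1,\ldots,y_d}=(y_j+1)/(n+1)$ being the paper's $\binom{n}{k}\big/\binom{n+1}{k+1}$ after cancellation of the factors for the other coordinates. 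Your packaging through the single function $p(y)$ is a modest streamlining, but the underlying argument is identical.
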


\begin{proof} 
Using the definition \eqref{eq:MEsimple} of the Markov property we want to show that
\begin{equation*}
  \bP(Y_{n+1}=y_{n+1}|Y_n=y_n,\ldots,Y_1=y_1)
         \, =\, \bP(Y_{n+1}=y_{n+1}|Y_n=y_n)
\end{equation*}
whenever $\bP(Y_{n+1}=y_{n+1},Y_n=y_n,\ldots,Y_1=y_1)>0$.
Fix $n\in\bN$, $y_n\in\bF_n$ and $y_{n+1}\in\bF_{n+1}$. We may assume that $y_{n+1}=y_n+e_j$ for 
some $j\in\{1,\ldots,d\}$, where $e_1,\ldots,e_d$ denote the unit vectors in $\bN_0^d$, 
as in Example~\ref{ex:NERW}. Let 
\begin{equation*}
  A(y_n):= \bigl\{(y_1,\ldots,y_{n-1}):\, \bP(Y_1=y_1,\ldots,Y_n=y_n,Y_{n+1}=y_{n+1})>0\bigr\}.
\end{equation*}
As the $Y$-sequence is the sequence of partial sums of unit vectors in $\bN_0^n$
there is a bijective correspondence between $A(y_n)$ and a
subset $B(y_n)$ of $\{e_1,\ldots,e_d\}^n$, where $B(y_n)$ has the property that, for all permutations $\pi$
of $\{1,\ldots,n\}$, 
\begin{equation*}
  (x_1,\ldots,x_n)\in B(y_n) \quad\Longleftrightarrow \quad (x_{\pi(1),\ldots,\pi(n)})\in B(y_n).
\end{equation*}
The function $(x_1,\ldots,x_n)\mapsto \bP(X_1=x_1,\ldots,X_n=x_n,X_{n+1}=e_j)$
is constant on $B(y_n)$ in view of the exchangeability of $X$, which implies that
\begin{equation*}
  (y_1,\ldots,y_{n-1}) \, \mapsto \, \bP(Y_1=y_1,\ldots,Y_n=y_n,Y_{n+1}=y_{n+1})
\end{equation*}
is constant on $A(y_n)$. The same reasoning shows that 
\begin{equation*}
  (y_1,\ldots,y_{n-1}) \mapsto \bP(Y_1=y_,\ldots,Y_n=y_n)
\end{equation*}
is also constant on $A(y_n)$.
Hence, for any fixed $(y_1,\ldots,y_{n-1})\in A(y_n)$,
\begin{align*}
  \bP(Y_{n+1}=y_{n+1}|Y_n=y_n)\ 
    &=\ \frac{\bP(Y_{n+1}=y_{n+1},Y_n=y_n)}{\bP(Y_n=y_n)}\\
    &=\ \frac{\sum_{(y_1',\ldots,y_{n-1}')\in A(y_n)}
                              \bP(Y_{n+1}=y_{n+1},Y_n=y_n, Y_1=y_1',\ldots,Y_{n-1}=y_{n-1}')}
                      {\sum_{(y_1',\ldots,y_{n-1}')\in A(y_n)}
                                  \bP(Y_n=y_n ,Y_1=y_1',\ldots,Y_{n-1}=y_{n-1}')}\\
    &=\ \frac{\# A(y_n)\, \bP(Y_{n+1}=y_{n+1},Y_n=y_n, Y_1=y_1,\ldots,Y_{n-1}=y_{n-1})}
                      {\# A(y_n)\, \bP(Y_n=y_n ,Y_1=y_1,\ldots,Y_{n-1}=y_{n-1})}\\
    &=\ \bP(Y_{n+1}=y_{n+1}|Y_n=y_n,\ldots,Y_1=y_1),
\end{align*}
which is the desired equality.

Similar arguments can be used to find the cotransitions. The two events in question may be decomposed as
\begin{equation*}
  \bP(Y_n=y_n,Y_{n+1}=y_n+e_j)
     \ =\ \sum_{(x_1,\ldots,x_n)\in B(y_n)}\bP(X_1=x_1,\ldots,X_n=x_n,X_{n+1}=e_j),
\end{equation*}

\vspace{-3mm}
\noindent
and
\begin{equation*}
  \bP(Y_{n+1}=y_n+e_j)
     \ =\ \sum_{(x_1,\ldots,x_n,x_{n+1})\in B(y_n+e_j)}\bP(X_1=x_1,\ldots,X_n=x_n,X_{n+1}=x_{n+1})
\end{equation*}
respectively, with $B(y_n+e_j)$ the set of possible increments leading to $Y_{n+1}=y_n+e_j$.
Suppose that $y_{n,j}=k$. Then there are $k$ steps in direction $j$ which have to be distributed to $n$ 
positions in the first event and $(k+1)$ such steps to $n+1$ possibilities for the second. Exchangeability 
implies that the respective probabilities are the same, so that 
\begin{equation*}
  \bP(Y_n=y_n|Y_{n+1}=y_n+e_j) \, 
                     =\, \frac{
                                      \binom{n}{k}}{
                        \binom{n+1}{k+1}}
                  \, = \, \frac{y_{n,j}+1}{n+1}. \qedhere    
\end{equation*}
\end{proof}

For finite state space $E$, where we may assume that $P(X_n=x)>0$ for all $x\in E$, 
Theorem~\ref{thm:deFinetti} is now immediate: From Proposition~\ref{prop:cotrans}  and Example~\ref{ex:NERW} 
we know the boundary for the $Y$-process and the conditional distribution of $Y$ given $Y_\infty$. In particular,  
given $Y_\infty=\mu$, the increments $(X_n)_{n\in\bN}$ are i.i.d.\ with distribution $\mu$. Hence 
$Y_\infty$, which is a random element of the probability simplex on $E$, is the directing measure for $X$.

Suppose now that the state space for the exchangeable sequence $X=(X_n)_{n\in\bN}$ is (a Borel subset of)
the unit interval
$[0,1)$, endowed with its Borel $\sigma$-field $\cB_{[0,1)}$. There are various possibilities for the step from
finite sets to this state space. We choose one that is based on binary expansion: Let 
$\Psi=(\Psi_k)_{k\in\bN}: [0,1)\to \{0,1\}^\infty$ be defined by
\begin{equation*}
  \Psi_k(x) := \lfloor 2^k x\rfloor - 2 \lfloor 2^{k-1} x\rfloor.
\end{equation*}
On the space $\{0,1\}^\infty$ of 0-1 sequences we consider the $\sigma$-field $\cB_{\{0,1\}^\infty}$ generated
by the projections or, equivalently, by the sets
\begin{equation*}
  A(x_1,\ldots,x_k) := \bigl\{y=(y_n)_{n\in \bN}\in \{0,1\}^\infty:\, y_1=x_1,\ldots,y_k=x_k\bigr\},
\end{equation*}
with $k \in \bN$ and $x_1,\ldots,x_k\in\{0,1\}$. Then, for each fixed $k\in\bN$, we obtain an exchangeable 
sequence $Y_k=(Y_{k,n})_{n\in\bN}$ with values in the finite set $\{0,1\}^k$ via
\begin{equation*}
  Y_{k,n} := \bigl(\Psi_1(X_n),\ldots,\Psi_k(X_n)\bigr), \ n\in \bN.
\end{equation*}
Let $M_k$ be the corresponding directing measure, which is a random element of the probability simplex 
on $\{0,1\}^k$. These are obviously consistent in the sense that, with probability~1,
$M_k$ is the push-forward of $M_{k+1}$ under the projection 
\begin{equation*}
 (x_1,\ldots,x_{k-1},x_k,x_{k+1})\mapsto (x_1,\ldots,x_{k-1},x_k) 
\end{equation*}
from $\{0,1\}^{k+1}$ to $\{0,1\}^k$. Outside some null set $N$ we may therefore apply Kolmogorov's
extension theorem to obtain a probability measure $M$ on $\cB_{\{0,1\}^\infty}$ which has the $M_k$'s as its 
projections to the first $k$ coordinates, $k\in\bN$.  The 
directing measure for $X$ now results as the push-forward of $M$
under the mapping $(x_n)_{n\in\bN} \mapsto \sum_{n=1}^\infty x_n 2^{-n}$.

For the step from state spaces that are Borel subsets of the unit interval to 
general Borel spaces we refer to~\cite[Section 7]{AldousSF}.

{\small
\bibliographystyle{amsalpha}

\def\cprime{$'$}
\providecommand{\bysame}{\leavevmode\hbox to3em{\hrulefill}\thinspace}
\providecommand{\MR}{\relax\ifhmode\unskip\space\fi MR }
\providecommand{\MRhref}[2]{%
  \href{http://www.ams.org/mathscinet-getitem?mr=#1}{#2}
}
\providecommand{\href}[2]{#2}

}

\end{document}